\newcommand{\rmi}{\mathrm{i}}
\newcommand{\rmd}{\mathrm{d}}
\newcommand{\rme}{\mathrm{e}}
\newcommand{\re}{\mathrm{Re}}
\newcommand{\im}{\mathrm{Im}}
\begin{document}

\title{Holomorphic representation of minimal surfaces in simply isotropic space}


\author{Luiz C. B. da Silva
}


\institute{Da Silva, L. C. B. \at
              Department of Physics of Complex Systems,\\               Weizmann Institute of Science, 
              Rehovot 7610001, Israel\\
              \email{luiz.da-silva@weizmann.ac.il}}
\date{First published in [J. Geom. (2021)
112:35] by Springer Nature (fulltext available at the link \url{https://doi.org/10.1007/s00022-021-00598-z})} 

\maketitle

\begin{abstract}
It is known that minimal surfaces in Euclidean space can be represented in terms of holomorphic functions. For example, we have the well-known Weierstrass representation, where part of the holomorphic data is chosen to be the stereographic projection of the normal of the corresponding surface, and also the Bj\"orling representation, where it is prescribed a curve on the surface and the unit normal on this curve. In this work, we are interested in the holomorphic representation of minimal surfaces in simply isotropic space, a three-dimensional space equipped with a rank 2 metric of index zero. Since the isotropic metric is degenerate, a surface normal cannot be unequivocally defined based on metric properties only, which leads to distinct definitions of an isotropic normal. As a consequence, this may also lead to distinct forms of a Weierstrass and of a Bj\"orling representation. Here, we show how to represent simply isotropic minimal surfaces in accordance with the  choice of an isotropic surface normal.
\keywords{Simply isotropic space \and minimal surface \and holomorphic representation \and stereographic projection}
\subclass{53A10 \and 53A35 \and 53C42}
\end{abstract}

\section{Introduction}

It is well-known that minimal surfaces in Euclidean space can be parameterized in terms of holomorphic functions, which gives the so-called Weierstrass representation \cite{Barbosa-Colares1986,Weierstrass}: $\mathbf{x}(z)=\re(\int^z\phi\,\rmd z)$, where $\phi$ is a holomorphic isotropic curve in $\mathbb{C}^3$ with no real periods, i.e., each $\phi_j$ is holomorphic, $\phi_1^2+\phi_2^2+\phi_3^2=0$, and each $\re\int^z\phi_j\rmd z$ is path-independent. In addition, if we define $\phi_1=\frac{F}{2}(1-G^2)$, $\phi_2=\frac{\rmi F}{2}(1+G^2)$, and $\phi_3=FG$, the holomorphic function $G$ can be seen as the stereographic projection of the Gauss map of the minimal immersion $\mathbf{x}(z)$. (The function $F$ can be associated with the differential of the third coordinate, i.e., the differential of the height data.) An alternative to the Weierstrass representation is the  Bj\"orling representation \cite{LopezMMJ2018,SchwarzCrelle1875}, which consists in the Cauchy problem for the mean curvature, i.e., one prescribes a curve $c(s)$ and the unit normal $\mathbf{n}(s)$ along $c$. The corresponding minimal surface is parameterized by $\mathbf{x}(z)=\re\int^z[c'(w)-\rmi\, \mathbf{n}(w)\times c'(w)]\rmd w$, where $c(w)$ and $\mathbf{n}(w)$ are analytic extensions of $\mathbf{n}$ and $c$. Alternatively, we may prescribe the initial curve together with the tangent plane. This later version of the Bj\"orling representation has been already discussed in simply isotropic space \cite{StrubeckerAM1954} which helps establishing simply isotropic analogs of two theorems due to Schwarz concerning lines and planes of symmetries of minimal surfaces \cite{SchwarzCrelle1875}. (See also chapter 12 of \cite{Sachs1990}.) Recently, Sato showed that simply isotropic minimal surfaces  also admit a Weierstrass representation given by \cite{SatoArXiv2018}: $\mathbf{x}(u, v) = \textrm{Re} \int^{z} (F, \rmi F, 2FG) \rmd w$, $z=u+\rmi v\in U\subseteq\mathbb{C}$. When expressed in their normal form, i.e., as a graph, isotropic minimal surfaces are the graph of harmonic functions and, therefore, we can write $\mathbf{x}(z)=(z,\re\,h)$, where $h$ is holomorphic. It is worth mentioning that Strubecker, for example in Ref. \cite{StrubeckerAMSUH1975}, p. 154 right after Eq. (23), refers to this representation as the isotropic analog of the Weierstrass representation.  Sato's approach in fact gives $\mathbf{x}(z)=(z,\re\,h)$ by setting $F(z)=1$ and choosing $G$ appropriately, but it is more general since we do not have to write the surface as a graph. 

{By adopting a Weierstrass representation instead of writing an isotropic minimal surface as a graph, a} natural question then arises: why should we use two holomorphic functions to represent isotropic minimal surfaces when we know that one is enough? Note that in Euclidean space we do have a reduction in the number of ``degrees of freedom", from three to two\footnote{Any conformal minimal immersion has harmonic coordinates and, consequently, each of the three coordinates can be locally seen as the real part of a holomorphic function on the plane.}. However, the Euclidean experience also teaches us that the advantage of a holomorphic representation lies in the ability to control key geometric information of minimal surfaces. The reduction in the number of holomorphic functions we need to represent minimal surfaces then follows as an extra. In the case of a Weierstrass representation, we control the (stereographic projection of the) unit normal. Then, we may also ask whether it is possible to interpret part of the holomorphic data of an isotropic minimal surface $M^2$ as the stereographic projection of its Gauss map. Observe, however, that in simply isotropic space there are more than one choice for a Gauss map, as recently emphasized in Ref. \cite{KelleciJMAA2021}. Therefore, to answer the previous question we must also specify what Gauss map we have in mind. Here, we are going to show that it is possible to find a Weierstrass representation for simply isotropic minimal surfaces such that part of their holomorphic data can be associated with the stereographic projection of either their parabolic normal or their minimal normal. (In this respect, we show that Sato's choice for the holomorphic representation is slightly related to the minimal normal. See Subsect. \ref{subsect::OtherChoicesWeierstRep} for a discussion of  his motivations.) We also discuss the Bj\"orling representation and show that there are three ways of doing that.

The remaining of this text is divided as follows. In Sect. 2, we present some background material on simply isotropic geometry. In Sect. 3, we present and study some properties of the stereographic projection in the simply isotropic space from a sphere of parabolic type to the plane. In Sect. 4, we present the main results of this work, namely the isotropic Weierstrass representation and its relation to the extrinsic  geometry of isotropic minimal surfaces. Finally, in Sect. 5, we discuss the Cauchy problem for the minimal surface equation, i.e., the Bj\"orling representation. The last section contains our concluding remarks. 

\section{Geometric background: the simply isotropic space}

The simply isotropic space $\mathbb{I}^3$ is an example of a Cayley-Klein geometry. More precisely, we start with the projective space and choose as the group of rigid motions those projectivies that leave the so-called absolute figure invariant. The space $\mathbb{I}^3$ is the geometry in affine space corresponding to the choice of an absolute figure composed of a plane and a degenerated quadric \cite{Sachs1990}. (See, e.g., Ref. \cite{daSilvaTJM2020,PottmannACM2009} for texts in English.) Here, we shall adopt the metric viewpoint. In other words, let us denote by the \emph{simply isotropic space}, $\mathbb{I}^3$, the vector space $\mathbb{R}^3$ equipped with the degenerated metric
\begin{equation}
    \langle u,v\rangle = u^1v^1+u^2v^2,
\end{equation}
where $u=(u^1,u^2,u^3)$ and $v=(v^1,v^2,v^3)$. The set of vectors $\{(0,0,u^3)\}$ that degenerate the isotropic metric is the set of \emph{isotropic vectors}. A plane containing an isotropic vector is called an \emph{isotropic plane}. On the set of isotropic vectors we use the secondary metric $\llangle u,v\rrangle=u^3v^3$. (Therefore, $\mathbb{I}^3$ is an example of a Cayley-Klein vector space \cite{StruveRM2005}.) The inner product induces an isotropic semi-norm in a natural way: $\Vert u\Vert=\sqrt{\langle u,u\rangle}$. In addition, we shall refer to the projection over the $xy$-plane as the \emph{top-view projection}, which is denoted here by $u=(u^1,u^2,u^3)\mapsto \tilde{u}\equiv(u^1,u^2,0)$.

In the following, it will prove useful to use the Euclidean scalar product. Let us denote by $\cdot$ and $\times$ the inner and vector products in Euclidean space $\mathbb{E}^3$: $u\cdot v=u^1v^1+u^2v^2+u^3v^3$ and $u\times v=(u^2v^3-u^3v^2,-u^1v^3+u^3v^1,u^1v^2-u^2v^1)$.

We are interested on surfaces $\mathbf{x}:(u^1,u^2)\in U\mapsto M\subset\mathbb{I}^3$ whose induced metric is non-degenerated. The \emph{admissible surfaces} are those surfaces $M^2$ that do not have any isotropic tangent plane. Therefore, $\vert\partial(x^1,x^2)/\partial(u^1,u^2)\vert\not=0$ and, consequently, every admissible surface can be reparameterized as a graph $(u^1,u^2)\mapsto (u^1,u^2,F(u^1,u^2))$, the so-called  \emph{normal form}. Here, the induced first fundamental form reads $\mathrm{I}=(\rmd u^1)^2+(\rmd u^2)^2$, which implies that every surface in isotropic space is intrinsically flat. However, it is possible to introduce an extrinsic Gaussian curvature as the ratio between the determinants of the first and second fundamental forms. Indeed, the normal of a surface with respect to the metric is the vector field $\mathcal{N}=(0,0,1)$. Then, we may define the Christoffel symbols $\Gamma_{ij}^k$ and the coefficients of the second fundamental form $h_{ij}$ by the equation
\[
\mathbf{x}_{ij} = \Gamma_{ij}^k\mathbf{x}_k+h_{ij}\mathcal{N},
\]
where $\mathbf{x}_{i}=\partial \mathbf{x}/\partial u^i$, $\mathbf{x}_{ij}=\partial^2\mathbf{x}/\partial u^i\partial u^j$, and we are summing on repeated indices (from 1 to 2). Finally, the isotropic Gaussian and mean curvatures are respectively defined by
\begin{equation}
    K = \frac{h}{g}=\frac{h_{11}h_{22}-h_{12}^2}{g_{11}g_{22}-g_{12}^2}\mbox{ and }H = \frac{g_{11}h_{22}-2g_{12}h_{12}+g_{22}h_{11}}{2(g_{11}g_{22}-g_{12}^2)},
\end{equation}
where $g_{ij}=\langle \mathbf{x}_i,\mathbf{x}_j\rangle$ denotes the coefficients of the first fundamental form. 

The coefficients of the second fundamental form can be computed as
\begin{equation}
    h_{ij}=\frac{\det(\mathbf{x}_1,\mathbf{x}_2,\mathbf{x}_{ij})}{\sqrt{g_{11}g_{22}-g_{12}^2}}=\mathbf{x}_{ij}\cdot\mathbf{N}_m,\,\mbox{ where }\mathbf{N}_m = \frac{\mathbf{x}_1\times \mathbf{x}_2}{\sqrt{g_{11}g_{22}-g_{12}^2}}.
\end{equation}
We shall refer to $\mathbf{N}_m$ as the \emph{minimal normal} since the corresponding shape operator $-\rmd\mathbf{N}_m$ is traceless \cite{KelleciJMAA2021}. Its determinant, however, gives the Gaussian curvature, $K=\det(-\rmd\mathbf{N}_m)$. 

\begin{remark}
{Rigorously, the minimal normal $\mathbf{N}_m$ does not provide a proper Gauss map from which one can define a shape operator understood as a self-adjoint operator on the tangent planes. Therefore, the trace and determinant of $-\rmd\mathbf{N}_m$ are computed with the proviso that the tangent vector $\mathbf{x}_i$ is formally identified with $\mathbf{a}_i=\mathbf{x}_i\times\mathcal{N}$. (Please, see Subsection 2.1 and Remark 2.1 of \cite{KelleciJMAA2021}.)}
\end{remark}

If we insist on seeing $K$ and $H$ as the determinant and trace of a shape operator, we may introduce the so-called \emph{parabolic normal} $\xi$ defined by \cite{daSilvaJG2019}
\begin{equation}
    \xi = \tilde{\mathbf{N}}_m+\frac{1}{2}\left(1-\Vert \tilde{\mathbf{N}}_m\Vert^2\right)\mathcal{N}.
\end{equation}
The parabolic normal $\xi$ takes values on the unit sphere of parabolic type $\Sigma^2=\{(x,y,z)\in\mathbb{I}^3:z=\frac{1}{2}(1-x^2-y^2)\}$. From the \emph{isotropic shape operator} $A=-\rmd\xi$, we can compute the Gaussian and mean curvatures as $K=\det A$ and $H=\mathrm{tr}\,A$ \cite{daSilvaJG2019,PavkovicJAZU1990}. In addition, we can alternatively compute the coefficients of the second fundamental form as $h_{ij}=\langle A(\mathbf{x}_i),\mathbf{x}_j\rangle$. 

\begin{remark}
{In addition to spheres of parabolic type, we also have the so-called {\emph{spheres of cylindrical  type}}, which are the metric spheres: $G(p,r)=\{x\in\mathbb{I}^3:\langle x-p,x-p\rangle=r^2\}$. These surfaces, however, are not admissible since all their tangent planes are isotropic. Therefore, we can not use these spheres to define a Gauss map for admissible surfaces as done for the parabolic normal.} 
\end{remark}

\subsection{Simply isotropic minimal surfaces}

{We define as minimal surfaces in simply space those surfaces with $H=0$.} If $M^2$ is parameterized in its normal form, $\mathbf{x}(u^1,u^2)=(u^1,u^2,F(u^1,u^2))$, then the minimal and parabolic normals are given by $\mathbf{N}_m=(-F_1,-F_2,1)$ and $\xi=(-F_1,-F_2,\frac{1}{2}-\frac{1}{2}(F_1^2+F_2^2))$. The first and second fundamental forms are $\mathrm{I}=\delta_{ij}\rmd u^i\rmd u^j$ and $\mathrm{II}=F_{ij}\rmd u^i\rmd u^j$, from which follows that the Gaussian and mean curvatures are $K=F_{11}F_{22}-F_{12}^2$ and $H=\frac{1}{2}(F_{11}+F_{22})$. Therefore, 
\begin{theorem}
Let $M^2\subset\mathbb{I}^3$ be an admissible simply isotropic minimal surface, then $M^2$ is locally the graph of a harmonic function. 
\end{theorem}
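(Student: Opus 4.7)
The plan is to use the admissibility assumption to reduce the problem to a graph and then to identify the minimality condition with the Laplace equation. The key observation is that everything we need has already been assembled in the preceding paragraphs; the proof should essentially be an extraction of these facts rather than a new computation.

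First, I would invoke admissibility to obtain a local normal-form parameterization. Since $M^2$ is admissible, no tangent plane is isotropic, so $|\partial(x^1,x^2)/\partial(u^1,u^2)|\neq 0$ in a neighborhood of each point. By the inverse function theorem, I can locally reparameterize so that $\mathbf{x}(u^1,u^2)=(u^1,u^2,F(u^1,u^2))$ for some smooth function $F$. This is exactly the normal form introduced earlier in Section 2.

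Next, I would substitute into the formula for the mean curvature given just above the theorem. In normal form we computed $H=\tfrac12(F_{11}+F_{22})$. The minimality hypothesis $H=0$ is therefore equivalent to
\begin{equation*}
F_{11}+F_{22}=0,
\end{equation*}
i.e., $\Delta F=0$ on the domain of the local chart, so $F$ is harmonic. This proves the local graph representation by a harmonic function.

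There is essentially no obstacle: the only non-obvious ingredient is the admissibility hypothesis, which is precisely what guarantees that the local graph reparameterization is available; without it the step from $\mathbf{x}(u^1,u^2)$ to $(u^1,u^2,F(u^1,u^2))$ would not be justified. The rest is a direct reading off of the previously derived formulas for the fundamental forms and mean curvature in the normal-form parameterization.
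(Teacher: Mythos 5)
Your proposal is correct and matches the paper's own argument: the theorem is stated there as a direct consequence of the preceding paragraph, where admissibility yields the local normal form $\mathbf{x}(u^1,u^2)=(u^1,u^2,F(u^1,u^2))$ and the computation $H=\frac{1}{2}(F_{11}+F_{22})$ identifies minimality with $\Delta F=0$. Nothing is missing.
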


{In Euclidean space, the minimal surfaces are the critical points of the area functional, i.e., $H=0$ is the Euler-Lagrange equation of the problem $\min_M\int_M\rmd A$. The same can not be done in the simply isotropic space since the area in the simply isotropic induced metric is the same as its projection on top view plane. In fact, if we fix the boundary curve $\gamma$, every surface $M^2$ with $\partial M^2=\gamma$ would be a critical point of the simply isotropic area functional. In the simply isotropic space, instead of the area computed from the isotropic metric, we may consider the so-called \emph{relative area} $\mathcal{O}^*$ \cite{Sachs1990,StrubeckerMZ1942}:}
\begin{equation}
    \mathcal{O}^* = \int_{U}\det(\xi,\mathbf{x}_1,\mathbf{x}_2)\,\rmd u^1\rmd u^2 = \int_{U} \xi\cdot\mathbf{N}_m \sqrt{\det g_{ij}}\,\rmd u^1\rmd u^2 = \int_{U} \xi\cdot\mathbf{N}_m\rmd A.
\end{equation}
{If $M^2$ is parametrized in its normal form over $U$, then the relative area becomes $\mathcal{O}^*=\frac{1}{2}\int_U(1+F_1^2+F_2^2)\rmd u^1\rmd u^2$. Now, consider a normal variation of $M^2$: $\mathbf{x}_{\varepsilon}=\mathbf{x}+\varepsilon  V\mathcal{N}=(u^1,u^2,F+\varepsilon V)$, where $V\vert_{\partial M^2}=0$. The relative area as a function of $\varepsilon$ is then}
\begin{eqnarray}
\mathcal{O}^*(\varepsilon) 
& = & \frac{1}{2}\int_U [1+F_1^2+F_2^2+2\varepsilon(V_1F_1+V_2F_2)+\varepsilon^2(V_1^2+V_2^2)]\,\rmd u^1\rmd u^2\nonumber\\
& = & \mathcal{O}^*(0)-\varepsilon\int_U V(F_{11}+F_{22})\,\rmd u^1\rmd u^2+\frac{\varepsilon^2}{2}\int_U(V_1^2+V_2^2)\,\rmd u^1\rmd u^2\nonumber\\
& = & \mathcal{O}^*(0)-2\varepsilon\int_U VH\,\rmd A+\frac{\varepsilon^2}{2}\int_U(V_1^2+V_2^2)\,\rmd u^1\rmd u^2.
\end{eqnarray}
{Therefore, a surface $M^2$ is a critical point of the relative area functional if and only if $H=0$. Note, however, that despite the Euler-Lagrange equation $H=0$ is a simply isotropic invariant, the relative area itself is not. (Note, in addition, that every simply isotropic minimal surface is stable, i.e., they all have positive second variation.)}

{An example of simply isotropic minimal surface is given by the helicoid, which is the graph of the harmonic function $F(x,y)=a \arctan\frac{y-y_0}{x-x_0}$. Since the helicoid is the only Euclidean minimal surface which is simultaneously the graph of a harmonic function, the helicoid is the only surface which is both minimal in $\mathbb{E}^3$ and $\mathbb{I}^3$. Further} examples of isotropic minimal surfaces can be provided by (i) looking for isotropic analogs of Euclidean minimal surfaces, such as the well known Enneper and Scherk surfaces \cite{StrubeckerAMSUH1975,StrubeckerCrelle1954}; (ii) employing some sort of separation of variables, such as the Scherk surfaces, which correspond to solutions of the form $z=f(x)+g(y)$ or $x=g(y)+h(z)$, or the so-called affine factorable surfaces \cite{AydinTWMS2020}, which correspond to solutions of the form $z=f(x)g(y+ax)$ or $x=g(y+az)h(z)$ ($a$ constant); or (iii) looking for surfaces invariant by a 1-parameter group of isotropic isometries \cite{daSilvaMJOU2021,Sachs1990}. Here, we shall be interested on a generic representation of isotropic minimal surfaces in terms of holomorphic functions.

\section{Stereographic projection in simply isotropic space}

In order to associate the holomorphic data of a Weierstrass representation with a choice of a Gauss map, we first define the stereographic projection of either a unit sphere of parabolic type or a horizontal plane over the (top view) plane. The first will be related to the parabolic normal, since it takes values on a unit sphere of parabolic type, while the second projection will be related to the minimal normal, since it takes values on the horizontal plane $\{z=1\}$.

Let $\Sigma^2$ be the isotropic unit sphere of parabolic type centered at the {origin\footnote{{By the center of the sphere $\Sigma^2$ we mean its focus.}}} 
\begin{equation}
    \Sigma^2 = \left\{(x,y,z)\in\mathbb{I}^3:z=\frac{1}{2}-\frac{1}{2}(x^2+y^2)\right\}.
\end{equation}
The North pole of $\Sigma^2$ is the point $N=(0,0,\frac{1}{2})$. (We may say that the South pole is the point of the sphere at infinity.) As in Euclidean space, we define the stereographic projection $\pi:\Sigma^2\setminus\{N\}\to\mathbb{C}^*$ by defining $\pi(p)$ as the intersection of the line connecting $N$ to $p$ with the $xy$-plane, which is identified with $\mathbb{C}$. (Here, $\mathbb{C}^*=\mathbb{C}\setminus\{0\}$.)  

The line $\ell$ connecting $N$ to $p=(p_1,p_2,p_3)\in\Sigma^2$ can be parameterized as $\ell:t\mapsto t(p-N)+N=(tp_1,tp_2,t(p_3-\frac{1}{2})+\frac{1}{2})$. Imposing the third coordinate of $\ell(t)$ to vanish, gives
\begin{equation}
    0=t(p_3-\frac{1}{2})+\frac{1}{2}\Rightarrow t=\frac{1}{1-2p_3}.
\end{equation}
Then, the stereographic projection $\pi$ is given by
\begin{equation}
    p\in\Sigma^2\setminus\{N\} \mapsto \pi(p)=\left(\frac{p_1}{1-2p_3},\frac{p_2}{1-2p_3}\right)=\left(\frac{p_1}{p_1^2+p_2^2},\frac{p_2}{p_1^2+p_2^2}\right).
\end{equation}

On the other hand, given $\pi(p)=(x,y,0)\in\mathbb{C}^*$, the line {$\lambda$} connecting $N$ to $\pi(p)$ is ${\lambda}:t\mapsto (\pi(p)-N)t+N=(xt,yt,\frac{1}{2}(1-t))$. Imposing ${\lambda}(t)\in\Sigma^2$, gives
\begin{equation}
    \frac{1}{2}-\frac{t}{2}=\frac{1}{2}-\frac{1}{2}[(xt)^2+(yt)^2]\Rightarrow t = t^2(x^2+y^2).
\end{equation}
Since $t\not=0$, we find $t=(x^2+y^2)^{-1}$. Thus, the inverse $\pi^{-1}$ of the stereographic projection is given by
\begin{equation}
    (x+\rmi y)\in\mathbb{C}^* \mapsto \pi^{-1}(x+\rmi y)=\left(\frac{x}{x^2+y^2},\frac{y}{x^2+y^2},\frac{1}{2}-\frac{1}{2(x^2+y^2)}\right).
\end{equation}
Note that the origin $0=0+\rmi0$ would be sent by $\pi^{-1}$ to a point on $\Sigma^2$ at infinity, which we may see as the South pole $S$ of $\Sigma^2$: $S\equiv\pi^{-1}(0)$. In short, we can define

\begin{definition}[Parabolic stereographic projection]
Let $\Sigma^2$ be the unit sphere of parabolic type in $\mathbb{I}^3$ centered at the origin and $N=(0,0,\frac{1}{2})$ its North pole. The stereographic projection, $\pi$, of $\Sigma^2$ over the {$xy$-plane}, identified with $\mathbb{C}$, is the map
$$
\begin{array}{ccccl}
    \pi & : & \Sigma^2\setminus\{N\} & \to     & \mathbb{C} \\
        &    & (p_1,p_2,p_3)  & \mapsto & \Big(\displaystyle\frac{p_1}{1-2p_3},\displaystyle\frac{p_2}{1-2p_3}\Big)=\displaystyle\frac{1}{p_1^2+p_2^2}(p_1,p_2). \\
\end{array}
$$
In addition, its inverse is the map given by
$$
\begin{array}{ccccl}
    \pi^{-1} & : & \mathbb{C}\setminus\{0\} & \to     & \Sigma^2 \\
        &    & x+\rmi y  & \mapsto & \Big(\displaystyle\frac{x}{x^2+y^2},\frac{y}{x^2+y^2},\frac{1}{2}-\displaystyle\frac{1}{2(x^2+y^2)}\Big). \\
\end{array}
$$
\end{definition}

Before studying the properties of the parabolic stereographic projection, let us define the stereographic projection of the plane $\Pi:z=1$ over the $xy$-plane. First, note that for a sphere of parabolic type of radius $R$, $\Sigma^2_R:z=\frac{R}{2}-\frac{1}{2R}(x^2+y^2)$, the corresponding stereographic projection is given by 
\begin{equation}
    \pi_R(p_1,p_2,p_3) = \Big(\frac{p_1}{1-\frac{2}{R}p_3},\frac{p_2}{1-\frac{2}{R}p_3}\Big)=\frac{R^2}{p_1^2+p_2^2}(p_1,p_2).
\end{equation}
In addition, for $R\gg1$, we have $\Sigma_R^2\sim\{z=\frac{R}{2}\}$ and $\pi_R(p)\sim \tilde{p}$. In other words, the sphere $\Sigma_R^2$ can be approximated by a plane parallel to the top view plane. This reasoning suggests defining the stereographic projection associated with the plane $\{z=1\}$ by the top view projection $\pi_{\infty}(p)=\tilde{p}$.

\begin{definition}[Top view projection as a stereographic projection]
Let $\Pi$ be the plane $\{(x,y,z):z=1\}$. The stereographic projection, $\pi_{\infty}$, of $\Pi$ over the {$xy$-plane}, identified with $\mathbb{C}$, is the map
$\pi_{\infty}(p_1,p_2,p_3)=(p_1,p_2)$. In addition, its inverse is simply given by $\pi_{\infty}^{-1}(x+\rmi y)= (x,y,1)$.
\end{definition}

For the stereographic projection of the Euclidean sphere over the plane, $\pi_E$, it is well known that great and small circles are sent under $\pi_E$ to circles or lines in $\mathbb{C}$ and vice versa \cite{Ahlfors1979}. We have an analogous result in isotropic space. To prove this, we first investigate the image of spherical $r$-geodesics under the parabolic stereographic projection in order to single out the properties characterizing their image in $\mathbb{C}$ and, later,  we investigate the image of generic plane curves on the parabolic sphere. (A curve on a surface $M^2$ is an $r$-geodesic if its acceleration vector is parallel to the parabolic normal of $M^2$ \cite{daSilvaJG2019,PavkovicJAZU1990}. It is worth mentioning that $r$-geodesics on a sphere of parabolic type come from the intersections with planes passing through its center \cite{daSilvaJG2019}.)  

\begin{proposition}\label{PropStereogProjRelGeod}
A curve $\alpha$ is an $r$-geodesic in $\Sigma^2$ if and only if, under the stereographic projection $\pi$,  it corresponds either to a straight line in $\mathbb{C}$ passing through the origin if $N\in\alpha$ or to a circle in $\mathbb{C}$ whose radius $R$ and center $\mathcal{C}$ satisfy $R^2=1+\mathrm{dist}(\mathcal{C},0)^2$ if $N\not\in\alpha$, where $N$ is the north pole of $\Sigma^2$.
\end{proposition}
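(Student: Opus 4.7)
The plan is to exploit the characterization of $r$-geodesics on $\Sigma^2$ as intersections with planes through the center (origin), and then translate this characterization across the stereographic projection by direct substitution.

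First, I would parametrize an arbitrary plane through the origin as $ax+by+cz=0$ and compute its intersection with $\Sigma^2$. Using $p_3=\tfrac{1}{2}(1-p_1^2-p_2^2)$ gives
\begin{equation*}
ap_1+bp_2+\tfrac{c}{2}(1-p_1^2-p_2^2)=0.
\end{equation*}
Then I would substitute the inverse stereographic coordinates $p_1=\tfrac{x}{x^2+y^2}$, $p_2=\tfrac{y}{x^2+y^2}$, so that $p_1^2+p_2^2=\tfrac{1}{x^2+y^2}$, and clear denominators by multiplying through by $x^2+y^2$. This yields the planar equation
\begin{equation*}
\tfrac{c}{2}(x^2+y^2)+ax+by-\tfrac{c}{2}=0.
\end{equation*}

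Next I would split into the two cases predicted by the statement. If $c=0$, the equation reduces to $ax+by=0$, a straight line through the origin in $\mathbb{C}$; moreover, the plane $ax+by=0$ contains the $z$-axis and hence the north pole $N=(0,0,\tfrac{1}{2})$, matching the condition $N\in\alpha$. If $c\neq0$, dividing by $c/2$ and completing the square gives
\begin{equation*}
\Bigl(x+\tfrac{a}{c}\Bigr)^{2}+\Bigl(y+\tfrac{b}{c}\Bigr)^{2}=1+\tfrac{a^{2}+b^{2}}{c^{2}},
\end{equation*}
so the image is a circle of center $\mathcal{C}=(-a/c,-b/c)$ and radius $R$ with $R^{2}=1+(a/c)^{2}+(b/c)^{2}=1+\mathrm{dist}(\mathcal{C},0)^{2}$; here $c\neq 0$ means $N\notin\alpha$.

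For the converse, given any line $ax+by=0$ through the origin I would associate the plane $ax+by=0$, while for a circle of center $\mathcal{C}=(h,k)$ and radius satisfying $R^{2}=1+h^{2}+k^{2}$ I would take the plane $-hx-ky+z=0$; applying the same calculation recovers exactly the prescribed curve, showing that every such line or circle is the image of an $r$-geodesic. The main thing to be careful about is keeping track of the exceptional case $c=0$ (and correspondingly the point $N$ being omitted from the domain of $\pi$), so that the line/circle dichotomy and the condition $N\in\alpha$ versus $N\notin\alpha$ are aligned; beyond this, the argument is purely algebraic and the computation above does all the work.
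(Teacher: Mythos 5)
Your proposal is correct and follows essentially the same route as the paper: both use the characterization of $r$-geodesics as intersections of $\Sigma^2$ with planes through the center, substitute the spherical equation into the plane equation, pass to the projected coordinates via $X^2+Y^2=(x^2+y^2)^{-1}$, and split into the cases $c=0$ (line through the origin, $N\in\alpha$) and $c\neq0$ (circle with $R^2=1+\mathrm{dist}(\mathcal{C},0)^2$), with the converse handled by the same choice of planes $ax+by=0$ and $-hx-ky+z=0$. The only cosmetic difference is that the paper verifies the converse by explicitly parameterizing the line/circle and applying $\pi^{-1}$, whereas you invoke the reversibility of the same computation, which is legitimate since $\pi$ is a bijection onto $\mathbb{C}^*$ and all steps are equivalences away from the origin.
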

\begin{proof}
Since any $r$-geodesic $\alpha$ in $\Sigma^2$ comes from the intersection with a plane $\Pi$ passing through the center of $\Sigma^2$ \cite{daSilvaJG2019}, we can implicitly write $\alpha$ as
\begin{equation}\label{eqImpEqParR-geod}
    \alpha:\left\{
    \begin{array}{c}
         z = \frac{1}{2}-\frac{1}{2}(x^2+y^2)  \\
         Ax+By+Cz=0
    \end{array}
    \right.,
\end{equation}
where $(A,B,C)\not=0$. Note that $(x,y,z)\in\alpha$ is sent to $\pi(\alpha)=\frac{1}{x^2+y^2}(x,y)$. 

Substituting the first expression of Eq. (\ref{eqImpEqParR-geod}) into the second, one has
$$2Ax+2By+C(1-x^2-y^2)=0.$$
If $C=0$, then $2Ax+2By=0\Rightarrow \frac{x}{x^2+y^2}A+\frac{y}{x^2+y^2}B=0$ and $\pi(\alpha)$ lies in a straight line in $\mathbb{C}$ passing through $0\in\mathbb{C}$. (Note that $0\not\in\pi(\alpha)$.) On the other hand, if $C\not=0$, then  $2Ax/(x^2+y^2)+2By/(x^2+y^2)+C/(x^2+y^2)=C$. Here, $N\not\in\alpha$ and, in addition, writing $(X,Y)=\pi(\alpha)=\frac{1}{(x^2+y^2)}(x,y)$, it follows that $X^2+Y^2=(x^2+y^2)^{-1}$. Finally,  
$$C(X^2+Y^2)+2AX+2BY=C\Rightarrow(X+\frac{A}{C})^2+(Y+\frac{B}{C})^2=1+\frac{A^2+B^2}{C^2},$$
which is the circle of center $\mathcal{C}=(A/C,B/C)$ and radius $R=\sqrt{1+\mathrm{dist}(\mathcal{C},0)^2}$.

Conversely, given $\ell:AX+BY=0$, we may parameterize $\ell^*\equiv\ell-\{0\}$ by $t\mapsto (t,-At/B)$, where we are assuming, without loss of generality, that $B\not=0$. Finally, $\ell^*$ is sent by $\pi^{-1}$ into $$t\mapsto \pi^{-1}(\ell^*(t))=\Big(\frac{B^2}{t(A^2+B^2)},-\frac{AB}{t(A^2+B^2)},\frac{1}{2}-\frac{B^2}{2t^2(A^2+B^2)}\Big),$$
which lies in the plane $\Pi:Ax+By+Cz=0$ with $C=0$. On the other hand, given a circle $c:(X-a)^2+(Y-b)^2=R^2=1+a^2+b^2$ in  $\mathbb{C}$, we may parameterize it by $\theta\mapsto (a+R\cos\theta,b+R\sin\theta).$ Applying $\pi^{-1}$ gives
$$\theta\mapsto\pi^{-1}(c(\theta))=\Big(\frac{a+R\cos\theta}{X^2+Y^2},\frac{b+R\sin\theta}{X^2+Y^2},\frac{1}{2}-\frac{1}{2(X^2+Y^2)}\Big),$$
where $X^2+Y^2=1+2(a^2+b^2)+2aR\cos\theta+2bR\sin\theta$. Direct computations show that $\pi^{-1}(c(\theta))$ lies in the plane $\Pi:-ax-by+z=0$.
\qed
\end{proof}

\begin{proposition}
A curve $\alpha$ in $\Sigma^2$ is a small circle, i.e., a curve obtained from the intersection of $\Sigma^2$ with a plane, if and only if it is sent by the parabolic stereographic project $\pi$ into either a circle in $\mathbb{C}$  if $N\not\in\alpha$ or into a straight line in $\mathbb{C}$ if $N\in\alpha$, where $N$ is the north pole of $\Sigma^2$.
\end{proposition}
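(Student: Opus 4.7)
The plan is to mimic the proof of Proposition \ref{PropStereogProjRelGeod}, now allowing the cutting plane to be arbitrary rather than passing through the center of $\Sigma^2$. Concretely, I would write the small circle $\alpha$ implicitly as the intersection
\begin{equation*}
    \alpha:\left\{
    \begin{array}{l}
         z = \frac{1}{2}-\frac{1}{2}(x^2+y^2),  \\
         Ax+By+Cz=D,
    \end{array}
    \right.
\end{equation*}
with $(A,B,C)\neq 0$, and eliminate $z$ to obtain $2Ax+2By-C(x^2+y^2)+(C-2D)=0$. Note that $N=(0,0,\tfrac{1}{2})\in\alpha$ if and only if $C=2D$, so this algebraic dichotomy matches the geometric one in the statement.

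For the direct implication I would set $(X,Y)=\pi(x,y,z)=(x,y)/(x^2+y^2)$, so that $X^2+Y^2=1/(x^2+y^2)$, and divide the displayed equation by $x^2+y^2$ to obtain
\begin{equation*}
(C-2D)(X^2+Y^2)+2AX+2BY-C=0.
\end{equation*}
When $C=2D$ (i.e.\ $N\in\alpha$) this is the linear equation $2AX+2BY=C$, hence a straight line in $\mathbb{C}$; note that this line need not pass through the origin, in contrast with the $r$-geodesic case where $D=0$ forced $C=0$. When $C\neq 2D$ (i.e.\ $N\notin\alpha$), dividing by $C-2D$ and completing the square yields a circle centered at $\mathcal{C}=\bigl(-\tfrac{A}{C-2D},-\tfrac{B}{C-2D}\bigr)$ of radius $R=\sqrt{|\mathcal{C}|^2+\tfrac{C}{C-2D}}$; in particular, relaxing $D=0$ removes the $r$-geodesic constraint $R^2=1+|\mathcal{C}|^2$, so generic radii are allowed.

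For the converse, following the template of Proposition \ref{PropStereogProjRelGeod}, I would parametrize a given line $\ell:AX+BY=\tfrac{C}{2}$ in $\mathbb{C}$ (assume $B\neq 0$ say) and a given circle $c:(X-a)^2+(Y-b)^2=R^2$, apply the explicit formula for $\pi^{-1}$, and verify by direct substitution that the resulting curves satisfy linear equations of the form $Ax+By+Cz=D$ with $C=2D$ in the line case and $C\neq 2D$ in the circle case. These verifications are purely algebraic, of the same flavor as those already carried out in Proposition \ref{PropStereogProjRelGeod}.

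The main obstacle I anticipate is the bookkeeping around the two cases ($N\in\alpha$ vs.\ $N\notin\alpha$) and the fact that, unlike the $r$-geodesic case, the plane $\Pi$ no longer passes through the origin, so the affine term $D$ must be tracked throughout. In particular, one must ensure that the parametrizations used in the converse are not empty (nondegenerate intersection of $\Pi$ with $\Sigma^2$) and that, when $N\in\alpha$, the image line is obtained as $\pi(\alpha\setminus\{N\})$ — the point $N$ playing the same role the point at infinity plays in the Euclidean stereographic picture.
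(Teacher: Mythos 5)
Your proposal is correct and follows essentially the same route as the paper: eliminate $z$ using the sphere equation, substitute $(X,Y)=\pi(\alpha)$ with $X^2+Y^2=(x^2+y^2)^{-1}$, and read off a line when $N\in\alpha$ and a circle otherwise, with the converse handled by applying $\pi^{-1}$ to parametrized lines and circles exactly as in Proposition \ref{PropStereogProjRelGeod}; your single dichotomy $C=2D$ versus $C\neq 2D$ merely streamlines the paper's two-level case split ($C=0$ or $C\neq0$, then $N\in\alpha$ or not). The one substantive point the paper treats that your sketch passes over is showing that in the $N\notin\alpha$ case the quantity $R^2=\vert\mathcal{C}\vert^2+\frac{C}{C-2D}$ is strictly positive, so that the locus is a genuine circle rather than a point or the empty set: the paper proves this geometrically by comparing the cutting plane with the parallel tangent plane of $\Sigma^2$, whereas in your write-up it suffices to add the remark that a locus with $R^2\leq 0$ contains at most one point and hence cannot contain the image of a curve $\alpha$.
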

\begin{proof}
Since any plane curve $\alpha$ in $\Sigma^2$ comes from the intersection with a plane $\Pi$ with normal $(A,B,C)\not=0$, we can implicitly write $\alpha$ as
\begin{equation}\label{eqParSphericalPlaneCurves}
    \alpha:\left\{
    \begin{array}{c}
         z = \frac{1}{2}-\frac{1}{2}(x^2+y^2)  \\
         Ax+By+Cz+D=0
    \end{array}
    \right..
\end{equation}
Note that if $(x,y,z)\in\alpha$, then $\pi(\alpha)=(\frac{x}{x^2+y^2},\frac{y}{x^2+y^2})$.

Now, substituting the first expression of Eq. (\ref{eqParSphericalPlaneCurves}) into the second, one has
\begin{equation}\label{EqIntParSphrWithPlaneSingleEq}
    2Ax+2By+C(1-x^2-y^2)+2D=0.
\end{equation}
If $C=0$, then $2Ax+2By+2D=0$. Since $C=0$, $N\in\alpha$ if and only if $D=0$. Then, $Ax+By=0$ and $\pi(\alpha)$ lies in a straight line passing through $0$. Otherwise, if $N\not\in\alpha$, then $Ax+By+D=0\Rightarrow A\frac{x}{x^2+y^2}+B\frac{y}{x^2+y^2}+\frac{D}{x^2+y^2}=0$. Writing $(X,Y)=\pi(\alpha)=\frac{1}{(x^2+y^2)}(x,y)$, it follows that $X^2+Y^2=(x^2+y^2)^{-1}$ and $\pi(\alpha)$ lies in the circle
$$\Big(X+\frac{A}{2D}\Big)^2+\Big(Y+\frac{B}{2D}\Big)^2=\frac{A^2+B^2}{4D^2}>0.$$

On the other hand, if $C\not=0$, then  $(C+2D)+2Ax+2By=C(x^2+y^2)$. Here, $N\in\alpha$ if and only if $C+2D=0$. So, if $N\in\alpha$, then $A\frac{x}{x^2+y^2}+B\frac{y}{x^2+y^2}=C$ and $\pi(\alpha)$ lies in a straight line not passing through the origin. Otherwise, if $N\not\in\alpha$, then writing $(X,Y)=\pi(\alpha)=\frac{1}{(x^2+y^2)}(x,y)$, it follows that Eq. (\ref{EqIntParSphrWithPlaneSingleEq}) leads to $(C+2D)(X^2+Y^2)+2AX+2BY=C$. Finally, $\pi(\alpha)$ lies in the circle
$$\Big(X+\frac{A}{C+2D}\Big)^2+\Big(Y+\frac{B}{C+2D}\Big)^2=\frac{C}{C+2D}+\frac{A^2+B^2}{(C+2D)^2}>0,$$
where the right-hand side of the equation above has to be positive because  $\Sigma^2\cap\Pi\not=\emptyset$, or just a single point. Indeed, since $C\not=0$, $\Pi$ can not be vertical and, consequently, there exists a point $p_0\in \Sigma^2$ at which $T_{p_0}\Sigma^2$ is parallel to $\Pi$. Since the implicit equation of $T_{p}\Sigma^2$ at any $p=(p_1,p_2,p_3)$ is $z-p_3=-p_1(x-p_1)-p_2(y-p_2)$, parallelism occurs at $p_0=(\frac{A}{C},\frac{B}{C},\frac{1}{2}-\frac{1}{2C^2}(A^2+B^2))$ since the equation of  $T_{p_0}\Sigma^2$ is $Ax+By+Cz=\frac{1}{2C}(A^2+B^2+C^2)$. Finally, assuming without loss of generality that $C>0$, for the intersection $\alpha=\Sigma^2\cap\Pi$ to be non-empty, we should have $-D<\frac{1}{2C}(A^2+B^2+C^2)$, i.e., $A^2+B^2+C(C+2D)>0$. (Equality would occur when $\Pi=T_{p_0}\Sigma^2$ and, consequently, $\alpha$ would degenerate to a single point $\alpha=\{p_0\}$.) Indeed, if two parallel planes $\Pi_i:ax+by+cz=d_i$ ($i=1,2$) have a normal $\mathbf{n}=(a,b,c)$ making an acute angle with $\mathcal{N}=(0,0,1)$, i.e., $c>0$, then $d_2>d_1$ implies that $\Pi_2$ is above $\Pi_1$ with respect to $\mathcal{N}$.

For the converse, the reader may follow similar steps to those of the proof of Prop. \ref{PropStereogProjRelGeod} to show that circles/lines in $\mathbb{C}$ are sent under $\pi^{-1}$ to small circles in $\Sigma^2$.
\qed
\end{proof}

\section{Weierstrass representation of simply isotropic minimal surfaces}

In analogy with what happens in $\mathbb{E}^3$,  if we apply the Laplace-Beltrami operator $\Delta_g$ to the parameterization $\mathbf{x}$ of a minimal surface $M^2$ in $\mathbb{I}^3$, we have $\Delta_g\mathbf{x}=2H\mathcal{N}$ \cite{SatoArXiv2018}. Therefore, if we parameterize $M^2$ with isothermal coordinates, i.e., $g_{11}=g_{22}$ and $g_{12}=0$, then the coordinates functions of $M^2$ are harmonic functions on the plane. (Remember, if $g_{ij}=F^2\delta_{ij}$, then $\Delta_g=\frac{1}{F^2}
\Delta$, where $\Delta$ is the flat 2d Laplacian.) Identifying the Euclidean plane with $\mathbb{C}$, we can then parameterize $M^2$ using the real part of holomorphic functions. More precisely, we can parameterize $M^2$ by  \cite{SatoArXiv2018}
\begin{equation}
    \mathbf{x}(z) = \re\left(\int_z\phi(w)\rmd w\right),
\end{equation}
where $\phi=(\phi_1,\phi_2,\phi_3)\in\mathbb{C}^3$ satisfies $\phi_1^2+\phi_2^2=0$ and $\vert\phi_1\vert^2+\vert\phi_2\vert^2=0$. The first condition guarantees $\mathbf{x}$ above is an isothermic simply isotropic minimal immersion, while the second guarantees $\mathbf{x}$ is admissible. Note that, due to the degenerate nature of the simply isotropic metric, the third coordinate $\phi_3$ is not functionally related to $\phi_1$ and $\phi_2$. In the following, we shall exploit this freedom to associated a Weierstrass representation with a given choice of a Gauss map.

If we write $\mathbf{x}=\re(\int\mathbf{\phi})=\frac{1}{2}(\int\mathbf{\phi}+\int\bar{\mathbf{\phi}})$, $\phi=(\phi_1,\phi_2,\phi_3)$, then using that $\partial_u=\partial_z+\partial_{\bar{z}}$ and $\partial_v=\rmi(\partial_z-\partial_{\bar{z}})$ \cite{Ahlfors1979}, $z=u+\rmi v$, we write the tangent vectors as
\begin{equation}
    \mathbf{x}_u = \displaystyle\frac{\mathbf{\phi}+\bar{\mathbf{\phi}}}{2}=\re(\mathbf{\phi})\mbox{ and }
    \mathbf{x}_v = -\displaystyle\frac{\mathbf{\phi}-\bar{\mathbf{\phi}}}{2\rmi}=-\im(\mathbf{\phi}).
\end{equation} 
Now, noticing that 
\begin{equation}
\mathbf{x}_u\times\mathbf{x}_v = (\mathrm{Im}(\phi_2\bar{\phi}_3),\mathrm{Im}(\phi_3\bar{\phi}_1),\mathrm{Im}(\phi_1\bar{\phi}_2)),
\end{equation}
the minimal normal is 
\begin{equation}
    \mathbf{N}_m = \left(\frac{\mathrm{Im}(\phi_2\bar{\phi}_3)}{\mathrm{Im}(\phi_1\bar{\phi}_2)},\frac{\mathrm{Im}(\phi_3\bar{\phi}_1)}{\mathrm{Im}(\phi_1\bar{\phi}_2)},1\right).
\end{equation}
In addition, since $0=\phi_1^2+\phi_2^2=(\phi_2-\rmi\phi_1)(\phi_2+\rmi\phi_1)$, we can write $\phi_2=\pm\rmi\phi_1$. The minimal normal then becomes
\begin{equation}
    \mathbf{N}_m = \left(\frac{\mathrm{Im}(\pm\rmi\phi_1\bar{\phi}_3)}{\mathrm{Im}(\mp\rmi\vert\phi_1\vert^2)},\frac{\mathrm{Im}(\phi_3\bar{\phi}_1)}{\mathrm{Im}(\mp\rmi\vert\phi_1\vert^2)},1\right) = \left(-\frac{\mathrm{Re}(\phi_3\bar{\phi}_1)}{\vert\phi_1\vert^2},\mp\frac{\mathrm{Im}(\phi_3\bar{\phi}_1)}{\vert\phi_1\vert^2},1\right).
\end{equation}
Finally, seeing the minimal normal $\mathbf{N}_m$ and the parabolic normal $\xi$ as maps taking values in $\mathbb{C}\times\mathbb{R}$, we can either write
\begin{equation}
    \mathbf{N}_m = \left(-\frac{\phi_3}{\phi_1},1\right)\mbox{ and }\xi = \left(-\frac{\phi_3}{\phi_1},\frac{1}{2}-\frac{1}{2}\left\vert\frac{\phi_3}{\phi_1}\right\vert^2\right),\,\mbox{ if }\phi_2=\rmi\phi_1,
\end{equation}
or 
\begin{equation}
    \mathbf{N}_m = \left(-\frac{\bar{\phi}_3}{\bar{\phi}_1},1\right)\mbox{ and }\xi = \left(-\frac{\bar{\phi}_3}{\bar{\phi}_1},\frac{1}{2}-\frac{1}{2}\left\vert\frac{\phi_3}{\phi_1}\right\vert^2\right),\,\mbox{ if }\phi_2=-\rmi\phi_1.
\end{equation}

Note that there is a freedom in the choice of the third coordinate of the complex curve $\phi$. Consequently, this allows us to choose the second holomorphic function $G$ to conform to the choice of Gauss map we have in mind. In the following, it will prove to be more convenient to choose $\phi_2=\rmi\phi_1$ when working with the minimal normal $\mathbf{N}_m$ and to choose $\phi_2=-\rmi\phi_1$ when working with the parabolic normal $\xi$. We should do that in order to have two holomorphic functions in the Weierstrass data instead of one holomorphic and another anti-holomorphic.

On the one hand, choosing $\phi_2=\rmi \phi_1$ and by requiring that the top view projection of the minimal normal $\mathbf{N}_m$ is part of the Weierstrass data, we can write for some holomorphic functions $F$ and $G$
\begin{equation}\label{eq::WeierstrassRepWithGtopviewNm}
    \phi = (F,\rmi F,-FG) \Rightarrow \tilde{\mathbf{N}}_m = G.
\end{equation}
From $\vert\phi_1\vert^2+\vert\phi_2\vert^2=2\vert F\vert^2$, it follows that $\mathbf{x}$ fails to be regular on the zeros of $F$. This also means that the singularities of a simply isotropic minimal immersion are isolated.

On the other hand, choosing $\phi_2=-\rmi \phi_1$ and by requiring that the  stereographic projection of the parabolic normal $\xi$ is part of the Weierstrass data, we can write for some holomorphic functions $F$ and  $G$
\begin{equation}\label{eq::WeierstrassRepWithGprojParNor}
    \phi = \left(F,-\rmi F,-\displaystyle\frac{F}{G}\right), 
\end{equation}
which gives 
\begin{equation}
    \xi = \left(\frac{G}{\vert G\vert^2},\frac{1}{2}-\frac{1}{2\vert G\vert^2}\right)\Rightarrow \pi(\xi) = G.
\end{equation}
As before, we also have $\vert\phi_1\vert^2+\vert\phi_2\vert^2=2\vert F\vert^2$, from which follows that $\mathbf{x}$ fails to be regular on the zeros of $F$. 

Conversely, on the one hand, suppose we have a meromorphic function $G$ and a holomorphic function $F$ defined on $M^2$ such that the zeros of $F$ coincide with the poles of $G$ in a way that a zero of order $m$ of $F$ corresponds to a pole of order $m$ of $G$. Then, $\phi_1=F$, $\phi_2=\rmi F$, and $\phi_3=-FG$ are holomorphic on $M^2$ and satisfy $\phi_1^2+\phi_2^2=0$ and $\vert\phi_1\vert^2+\vert\phi_2\vert^2>0$ outside the zeros of $F$. In addition, if $\phi_1,\phi_2,$ and $\phi_3$ have no real periods we obtain a simply isotropic minimal immersion $\mathbf{x}:M^2\to\mathbb{I}^3$ such that $\tilde{\mathbf{N}}_m=G$. On the other hand, suppose we have holomorphic functions $G$ and $F$ defined on $M^2$ such that the zeros of $F$ coincide with the zeros of $G$ in a way that a zero of order $m$ of $F$ corresponds to a zero of order $m$ of $G$. Then, $\phi_1=F$, $\phi_2=-\rmi F$, and $\phi_3=-F/G$ are holomorphic on $M^2$ and satisfy $\phi_1^2+\phi_2^2=0$ and $\vert\phi_1\vert^2+\vert\phi_2\vert^2>0$ outside the zeros of $F$. In addition, if $\phi_1,\phi_2,$ and $\phi_3$ have no real periods we obtain a simply isotropic minimal immersion $\mathbf{x}:M^2\to\mathbb{I}^3$ such that $\pi(\xi)=G$.

\subsection{Extrinsic geometry of simply isotropic minimal surfaces}

Now, we shall focus on extrinsic geometric properties. For that, we must compute the second fundamental form of a minimal immersion $\mathbf{x}=\re\int\phi$. Since the tangent vectors are
\begin{equation}\label{eq::TangVecGenericHolRep}
    \mathbf{x}_u = \frac{\phi+\bar{\phi}}{2}=\re(\phi)\mbox{ and }\mathbf{x}_v = -\frac{\phi-\bar{\phi}}{2\rmi}=-\im(\phi),
\end{equation}
it follows that the second derivatives are
\begin{equation}\label{eq::2ndDerGenericHolRep}
    \mathbf{x}_{uu} = \displaystyle\frac{\mathbf{\phi}'+\bar{\mathbf{\phi}}'}{2}=\re(\mathbf{\phi}'),\,\mathbf{x}_{uv} = -\displaystyle\frac{\mathbf{\phi}'-\bar{\mathbf{\phi}}'}{2\rmi}=-\im(\mathbf{\phi}'),\,
    \mathbf{x}_{vv} = -\re(\mathbf{\phi}').
\end{equation}
Finally, we are in condition to compute the second fundamental form and, from it, the Gaussian curvature of a simply isotropic minimal immersion. But, first, we need the following auxiliary result.

\begin{lemma}\label{lem::1st2ndFFandKminComplexCurve}
The first and second fundamental forms $\mathrm{I}$ and $\mathrm{II}$ and the Gauss curvature $K$ of the simply isotropic minimal immersion associated with the complex curve $\phi=(\phi_1,\phi_2=\pm\rmi\phi_1,\phi_3)$ are given by
\begin{equation}
    \mathrm{I} = \vert \phi_1\vert^2(\rmd u^2+\rmd v^2),
\end{equation}
\begin{equation}
    \mathrm{II} = \re\left[\phi_1\left(\frac{\phi_3}{\phi_1}\right)'\right](\rmd u^2-\rmd v^2)-2\,\im\left[\phi_1\left(\frac{\phi_3}{\phi_1}\right)'\right]\rmd u\rmd v,
\end{equation}
and
\begin{equation}
    K = - \left\vert\frac{1}{ \phi_1}\left(\frac{\phi_3}{\phi_1}\right)'\right\vert^2,
\end{equation}
respectively.
\end{lemma}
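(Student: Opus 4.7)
The plan is to compute the three quantities in order using the already established expressions for $\mathbf{x}_u,\mathbf{x}_v$ and $\mathbf{x}_{uu},\mathbf{x}_{uv},\mathbf{x}_{vv}$ in Eqs.~(\ref{eq::TangVecGenericHolRep}) and~(\ref{eq::2ndDerGenericHolRep}) together with the explicit form of $\mathbf{N}_m$ in terms of $\phi_1$ and $\phi_3$ obtained earlier. Substituting $\phi_2=\pm\rmi\phi_1$, the top-view components of $\mathbf{x}_u$ are $(\re\phi_1,\mp\im\phi_1)$ and those of $\mathbf{x}_v$ are $(-\im\phi_1,\mp\re\phi_1)$. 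Since the isotropic inner product only reads these first two coordinates, a direct computation gives $g_{11}=g_{22}=\vert\phi_1\vert^2$ and $g_{12}=0$, yielding the stated formula for $\mathrm{I}$.

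For $\mathrm{II}$ I would compute each $h_{ij}=\mathbf{x}_{ij}\cdot\mathbf{N}_m$ using the Euclidean dot product and the representation $\mathbf{N}_m=(-\re(\phi_3/\phi_1),\mp\im(\phi_3/\phi_1),1)$ already recorded. Expanding $h_{11}=\mathbf{N}_m\cdot\re\phi'$ and applying the bilinear identities $\re(ab)=\re a\,\re b-\im a\,\im b$ and $\im(ab)=\re a\,\im b+\im a\,\re b$ collapses the expression to $h_{11}=\re\phi_3'-\re[(\phi_3/\phi_1)\phi_1']$. The crucial algebraic step is the quotient-derivative identity $\phi_1(\phi_3/\phi_1)'=\phi_3'-(\phi_3/\phi_1)\phi_1'$, which rewrites this as $h_{11}=\re[\phi_1(\phi_3/\phi_1)']$. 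An analogous manipulation gives $h_{12}=-\im[\phi_1(\phi_3/\phi_1)']$; and because $\mathbf{x}_{vv}=-\mathbf{x}_{uu}$, one immediately obtains $h_{22}=-h_{11}$. Assembling $\mathrm{II}=h_{11}\rmd u^2+2h_{12}\rmd u\rmd v+h_{22}\rmd v^2$ then produces the claimed formula.

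The Gaussian curvature follows by substitution into $K=(h_{11}h_{22}-h_{12}^2)/(g_{11}g_{22}-g_{12}^2)$. The numerator simplifies to $-(h_{11}^2+h_{12}^2)=-\vert\phi_1(\phi_3/\phi_1)'\vert^2$ thanks to $h_{22}=-h_{11}$, while the denominator is $\vert\phi_1\vert^4$; after cancelling one factor of $\vert\phi_1\vert^2$ one reaches $K=-\vert(1/\phi_1)(\phi_3/\phi_1)'\vert^2$, as asserted.

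No individual step is conceptually hard; the main obstacle is merely bookkeeping the sign $\pm$ coming from the choice $\phi_2=\pm\rmi\phi_1$. This sign, however, cancels inside every scalar $h_{ij}$ because the $y$-components of $\mathbf{N}_m$ and of $\mathbf{x}_{ij}$ both carry the same $\mp$, so the lemma's formulas hold uniformly in both branches and no separate case analysis is really needed.
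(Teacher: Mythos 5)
Your proof is correct and follows essentially the same route as the paper: compute $g_{ij}$ from the isotropic metric applied to $\mathbf{x}_u,\mathbf{x}_v$, compute $h_{ij}=\mathbf{x}_{ij}\cdot\mathbf{N}_m$ and simplify with the quotient-derivative identity $\phi_1(\phi_3/\phi_1)'=\phi_3'-(\phi_3/\phi_1)\phi_1'$, then take $K=(h_{11}h_{22}-h_{12}^2)/(g_{11}g_{22}-g_{12}^2)$. The only (harmless) difference is that you handle both branches $\phi_2=\pm\rmi\phi_1$ at once via the sign-cancellation observation, whereas the paper computes the case $\phi_2=\rmi\phi_1$ and notes the other is analogous.
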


From Lemma \ref{lem::1st2ndFFandKminComplexCurve}, the proof of the next theorem follows straightforwardly.

\begin{theorem}\label{thr::1st2ndFFandKMinNormal}
The first and second fundamental forms and the Gauss curvature of the minimal immersion associated with $\phi=(F,\rmi F,-FG)$, so that $G=\tilde{\mathbf{N}}_m$, are given by
\begin{equation}
    \mathrm{I} = \vert F\vert^2\,\vert\rmd z\vert^2,\,    \mathrm{II} =-\re(FG'\,\rmd z^2), \mbox{ and }    K = - \left\vert\frac{G'}{F}\right\vert^2,
\end{equation}
respectively. On the other hand, the first and second fundamental form and Gauss curvature of the minimal immersion associated with $\phi=(F,-\rmi F,-\frac{F}{G})$, so that $G=\pi(\xi)$, are given by
\begin{equation}
    \mathrm{I} = \vert F\vert^2\,\vert\rmd z\vert^2,\,
    \mathrm{II} = \re\left(F\frac{G'}{G^2}\rmd z^2\right),
\mbox{ and }
    K = - \left\vert\frac{ G'}{F G^2}\right\vert^2,
\end{equation}
respectively.
\end{theorem}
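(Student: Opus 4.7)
The plan is to simply substitute the two Weierstrass triples from Eqs. (\ref{eq::WeierstrassRepWithGtopviewNm}) and (\ref{eq::WeierstrassRepWithGprojParNor}) into the general formulas of Lemma \ref{lem::1st2ndFFandKminComplexCurve} and then repackage the resulting real bilinear form in terms of $\rmd z^2$. In both cases $\phi_1 = F$, so $|\phi_1|^2 = |F|^2$ and the first fundamental form is immediately $\mathrm{I} = |F|^2\,|\rmd z|^2$, with no further computation needed.

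For the first case, $\phi = (F,\rmi F,-FG)$, I would compute $\phi_3/\phi_1 = -G$, hence $(\phi_3/\phi_1)' = -G'$ and $\phi_1(\phi_3/\phi_1)' = -FG'$. Substituting into the Lemma yields
\begin{equation*}
\mathrm{II} = -\re(FG')(\rmd u^2 - \rmd v^2) + 2\,\im(FG')\,\rmd u\,\rmd v,
\end{equation*}
and $K = -|(1/F)(-G')|^2 = -|G'/F|^2$. The only compaction required is the observation that $\rmd z^2 = (\rmd u^2 - \rmd v^2) + 2\rmi\,\rmd u\,\rmd v$, so that $\re(FG'\,\rmd z^2) = \re(FG')(\rmd u^2-\rmd v^2) - 2\im(FG')\,\rmd u\,\rmd v$; comparing signs gives $\mathrm{II} = -\re(FG'\,\rmd z^2)$.

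For the second case, $\phi = (F,-\rmi F,-F/G)$, the sign convention $\phi_2 = -\rmi\phi_1$ is still covered by the Lemma (which allows either sign in $\phi_2 = \pm\rmi\phi_1$). Here $\phi_3/\phi_1 = -1/G$, so $(\phi_3/\phi_1)' = G'/G^2$ and $\phi_1(\phi_3/\phi_1)' = FG'/G^2$. Substituting into the same general formula and applying the same $\rmd z^2$ identity yields $\mathrm{II} = \re\!\bigl(FG'/G^2\,\rmd z^2\bigr)$, while $K = -|(1/F)(G'/G^2)|^2 = -|G'/(FG^2)|^2$, as claimed.

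There is essentially no obstacle here, since both conclusions are mechanical substitutions into Lemma \ref{lem::1st2ndFFandKminComplexCurve}. The only place a reader might stumble is the sign bookkeeping when rewriting the real symmetric form in $(\rmd u, \rmd v)$ as the real part of a holomorphic quadratic differential in $\rmd z^2$; I would therefore state that identity explicitly once and then apply it to both cases in parallel.
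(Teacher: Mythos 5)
Your proposal is correct and is exactly the paper's route: the paper proves the theorem by noting it "follows straightforwardly" from Lemma \ref{lem::1st2ndFFandKminComplexCurve}, i.e., by the same substitutions $\phi_3/\phi_1=-G$ (resp. $-1/G$) and the identity $\re(A\,\rmd z^2)=\re(A)(\rmd u^2-\rmd v^2)-2\,\im(A)\,\rmd u\,\rmd v$ that you spell out. Your sign bookkeeping and the remark that the Lemma covers both choices $\phi_2=\pm\rmi\phi_1$ are consistent with the paper.
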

\begin{proof}[of Lemma \ref{lem::1st2ndFFandKminComplexCurve}]
We will do the proof for $\phi_2=\rmi\phi_1$, the proof for $\phi_2=-\rmi\phi_1$ being entirely analogous. The coefficients of the first fundamental form are
$$
    \langle\mathbf{x}_u,\mathbf{x}_u\rangle = \sum_{j=1}^2\frac{\phi_j^2+2\phi_1\bar{\phi}_1+\phi_j^2}{4} = \frac{\vert \phi_1\vert^2+\vert \phi_2\vert^2}{2},
    \langle\mathbf{x}_u,\mathbf{x}_v\rangle = \sum_{j=1}^2\frac{\phi_j^2-\bar{\phi}_j^2}{-4\rmi} = 0,
$$
and
\begin{eqnarray*}
    \langle\mathbf{x}_v,\mathbf{x}_v\rangle & = & -\sum_{j=1}^2\frac{\phi_j^2-2\phi_1\bar{\phi}_1+\phi_j^2}{4} = \frac{\vert \phi_1\vert^2+\vert \phi_2\vert^2}{2}.
\end{eqnarray*}
Substituting $\phi_2=\rmi\phi_1$ gives the desired expression for the metric. 

For the second fundamental form, first note that the minimal normal is given by $\mathbf{N}_m=(-\frac{1}{2}(\frac{\phi_3}{\phi_1}+\frac{\bar{\phi}_3}{\bar{\phi}_1}),-\frac{1}{2\rmi}(\frac{\phi_3}{\phi_1}-\frac{\bar{\phi}_3}{\bar{\phi}_1}),1)$. Then, the coefficients of the second fundamental form are
\begin{eqnarray*}
 h_{11} & = & \frac{\phi'+\bar{\phi}'}{2}\cdot\mathbf{N}_m\\
 & = & (\frac{\phi_1'+\bar{\phi}_1'}{2},\frac{\rmi\phi_1'-\rmi\bar{\phi}_1'}{2},\frac{\phi_3'+\bar{\phi}_3'}{2})\cdot(-\frac{1}{2}(\frac{\phi_3}{\phi_1}+\frac{\bar{\phi}_3}{\bar{\phi}_1}),-\frac{1}{2\rmi}(\frac{\phi_3}{\phi_1}-\frac{\bar{\phi}_3}{\bar{\phi}_1}),1)\\
 & = & \frac{1}{2}\Big(\phi_1(\frac{\phi_3}{\phi_1})'+\overline{\phi_1(\frac{\phi_3}{\phi_1})'}\Big)= \re[\phi_1(\frac{\phi_3}{\phi_1})'],
\end{eqnarray*}
\begin{eqnarray*}
 h_{12} & = & -\frac{\phi'-\bar{\phi}'}{2\rmi}\cdot\mathbf{N}_m\\
 & = & (-\frac{\phi_1'-\bar{\phi}_1'}{2\rmi},-\frac{\rmi\phi_1'+\rmi\bar{\phi}_1'}{2\rmi},-\frac{\phi_3'-\bar{\phi}_3'}{2\rmi})\cdot(-\frac{1}{2}(\frac{\phi_3}{\phi_1}+\frac{\bar{\phi}_3}{\bar{\phi}_1}),-\frac{1}{2\rmi}(\frac{\phi_3}{\phi_1}-\frac{\bar{\phi}_3}{\bar{\phi}_1}),1)\\
 & = & -\frac{1}{2}\Big(\phi_1(\frac{\phi_3}{\phi_1})'-\overline{\phi_1(\frac{\phi_3}{\phi_1})'}\Big)= -\im[\phi_1(\frac{\phi_3}{\phi_1})'],
\end{eqnarray*}
and $h_{22} = -h_{11}=\re[\phi_1(\frac{\phi_3}{\phi_1})']$. Finally, the Gaussian curvature becomes
\begin{equation*}
    K = -\frac{(\re[\phi_1(\frac{\phi_3}{\phi_1})'])^2+(\im[\phi_1(\frac{\phi_3}{\phi_1})'])^2}{\vert \phi_1\vert^4}=
    -\frac{1}{\vert \phi_1\vert^2}\left\vert (\frac{\phi_3}{\phi_1})'\right\vert^2.
\end{equation*}
\qed
\end{proof}

From the second fundamental form in Theorem \ref{thr::1st2ndFFandKMinNormal} we conclude that $\mathbf{v}=v_1\mathbf{x}_u+ v_2\mathbf{x}_v$ points to an asymptotic direction if and only if $FG'(v_1+\rmi v_2)^2\in \rmi\mathbb{R}$ and to a principal curvature direction if and only if $FG'(v_1+\rmi v_2)^2\in \mathbb{R}$. An analogous conclusion is valid for the data $\phi=(F,-\rmi F,-\frac{F}{G})$.

For every $\theta$, the minimal surface $\mathbf{x}_{\theta}$ associated with the Weierstrass data $({\rme^{-\rmi\theta}}F,G)$ is isometric to the surface $\mathbf{x}_{0}$ associated with the data $(F,G)$ and together they form {an} \emph{associate family} of minimal surfaces: $${\mathbf{x}_{\theta}=\cos(\theta)\,\mathrm{Re}\int\phi\,\rmd z+\sin(\theta)\,\mathrm{Im}\int\phi\,\rmd z}.$$
The surface with $\theta=\frac{\pi}{2}$ is called the \emph{conjugate} of the  surface $\mathbf{x}_{0}$. Therefore, it follows as a corollary of Theorem \ref{thr::1st2ndFFandKMinNormal} that asymptotic {and} principal directions of $\mathbf{x}_{\theta}$ are mapped {respectively} into principal {and} asymptotic directions of its conjugate surface $\mathbf{x}_{\theta+\frac{\pi}{2}}$.

\subsection{Other choices of a Weierstrass representation}
\label{subsect::OtherChoicesWeierstRep}

The holomorphic data in Sato's choice of representation is $\phi=(F,\rmi F,2FG)$ \cite{SatoArXiv2018}. For this representation, we have $\tilde{\mathbf{N}}_m=2G$ and $\pi(\xi)=-\frac{2}{\bar{G}}$. Moreover, in Ref. \cite{SatoArXiv2018} it is also provided the expression for the second fundamental form and Gaussian curvature. From the above, $\mathrm{II}$ and $K$ of the minimal immersion $\mathbf{x}=\re\int(F,\rmi F,2FG)$ can be further simplified and written as 
$$ \mathrm{II} = 2\,\re(FG'\rmd z^2)
\mbox{ and }K = -4\Big\vert\frac{G'}{F}\Big\vert^2.$$

It is worth mentioning that the motivation of Ref. \cite{SatoArXiv2018} was to understand stationary surfaces in 4d Minkowski space $\mathbb{E}_1^4$, i.e., spacelike surfaces with zero mean curvature. Indeed, from the fact that $\mathbb{I}^3$ can be isometrically immersed in $\mathbb{E}_1^4$, it is established that there exists a one-to-one correspondence between flat stationary and simply isotropic minimal surfaces, up to rigid motions in the respective spaces. In addition, using that the 3d Euclidean and Minkowski spaces can also be isometrically immersed in $\mathbb{E}_1^4$, it is shown that minimal, maximal, and simply isotropic minimal surfaces in $\mathbb{E}^3$, $\mathbb{E}_1^3$, and $\mathbb{I}^3$, {respectively,} can be associated with members of the 1-parameter family $\{f_{\theta}\}_{\theta\in[0,2\pi]}$ of stationary surfaces given by \cite{SatoArXiv2018}
\begin{equation}
    f_{\theta}=\re\int^z\Big(F(1-\cos2\theta\,G^2),\rmi F(1+\cos2\theta\,G^2),2\cos\theta\,FG,2\sin\theta\,FG\Big)\rmd z.
\end{equation}
In fact, the immersions $f_{0},f_{\frac{\pi}{4}}$, and $f_{\frac{\pi}{2}}$ can be associated with minimal, simply isotropic minimal, and maximal surfaces in $\mathbb{E}^3$, $\mathbb{I}^3$, and $\mathbb{E}_1^3$, respectively. Moreover, these stationary surfaces are contained in a 3d subspace {of $\mathbb{E}_1^4$} and it is possible to deduce that $f_{0},f_{\frac{\pi}{4}}$, and $f_{\frac{\pi}{2}}$ correspond to surfaces with Gaussian curvature $K\leq0$, $K=0$, and $K\geq0$ \cite{MaAM2013,SatoArXiv2018}, respectively\footnote{The correspondence with minimal surfaces in $\mathbb{I}^3$ is a special feature of $\mathbb{E}_1^4$ since a zero mean curvature surface in 4d Euclidean space with zero Gaussian curvature must be a plane.}.

Finally, the representations we proposed in this work, and also the one proposed by Sato, are by no means the only possible choices. For example, it seems natural to choose $\phi_1=F$, $\phi_2=-\rmi F$, and $\phi_3=G$, but note that $\mathbf{N}_m=\frac{1}{\vert F\vert^2}(\mathrm{Re}(\bar{F}G),\mathrm{Im}(\bar{F}G),1)$, which implies  $\tilde{\mathbf{N}}_m=G/F$ and $\pi\circ\xi=\bar{F}/\bar{G}$. Therefore, neither $F$ nor $G$ have a clear geometric meaning as in (\ref{eq::WeierstrassRepWithGtopviewNm}) or (\ref{eq::WeierstrassRepWithGprojParNor}). Yet another possibility is to choose $\phi_1=FG$, $\phi_2=\rmi FG$, and $\phi_3=F$. Here, the minimal normal is $\mathbf{N}_m=\frac{1}{\vert G\vert^2}(\re\,G,\im\,G,1)$, which implies $\tilde{\mathbf{N}}_m=1/\bar{G}$ and $\pi\circ\xi=G$. The latter has the geometric meaning we seek, but the determinant of the metric is $\det g_{ij}=2\vert F\vert^2\vert G\vert^2$ and, consequently, we can no longer associate the singularities of the minimal immersion with the zeros of a single function. 

\subsection{{Examples}}
\label{subsect::ExamplesWeierstrassRepr}

{We are going to build examples of  minimal surfaces  corresponding to the isotropic counterparts of the helicoid, catenoid, and Scherk surfaces using the holomorphic representation. (See Sato \cite{SatoArXiv2018} for further examples.)}

{\textit{(a) The helicoid and logarithmoid of revolution}: Consider the holomorphic data $F=1$ and $G=-z/p$, where $p\in\mathbb{R}$. Using the minimal normal representation, we have $ \int(F,\rmi F,-FG)\rmd z = (z,\rmi z,\frac{z^2}{2p})+(a,b,c)$. Thus, the corresponding minimal surface is the hyperbolic paraboloid:}
\begin{equation}
    \mathbf{x}(z=u+\rmi v) = (u,-v,\frac{u^2-v^2}{2p}). 
\end{equation}
{The corresponding conjugate surface is the hyperbolic paraboloid}
\begin{equation}
    \mathbf{x}_{\frac{\pi}{2}}(z=u+\rmi v) = (v,u,\frac{uv}{p}). 
\end{equation}

{On the other hand, using the parabolic normal representation,  we have $\int(F,-\rmi F,-F/G)\rmd z = (z,-\rmi z,p\ln z)+(a,b,c)$. Thus, up to translations, the corresponding minimal surface is the logarithmoid of revolution (see Fig. \ref{fig:AssociatedFamilyHelicoid}):}
\begin{equation}
    \mathbf{x}(z=r\rme^{\rmi \varphi}) = (r\cos\varphi,r\sin\varphi,p\ln r).
\end{equation}
{The corresponding conjugate surface is the helicoid (see Fig. \ref{fig:AssociatedFamilyHelicoid}):}
\begin{equation}
    \mathbf{x}_{\frac{\pi}{2}}(z=r\rme^{\rmi \varphi}) = (r\sin\varphi,-r\cos\varphi,p\,\varphi).
\end{equation}

\begin{figure}
    \centering
    \includegraphics[width=\linewidth]{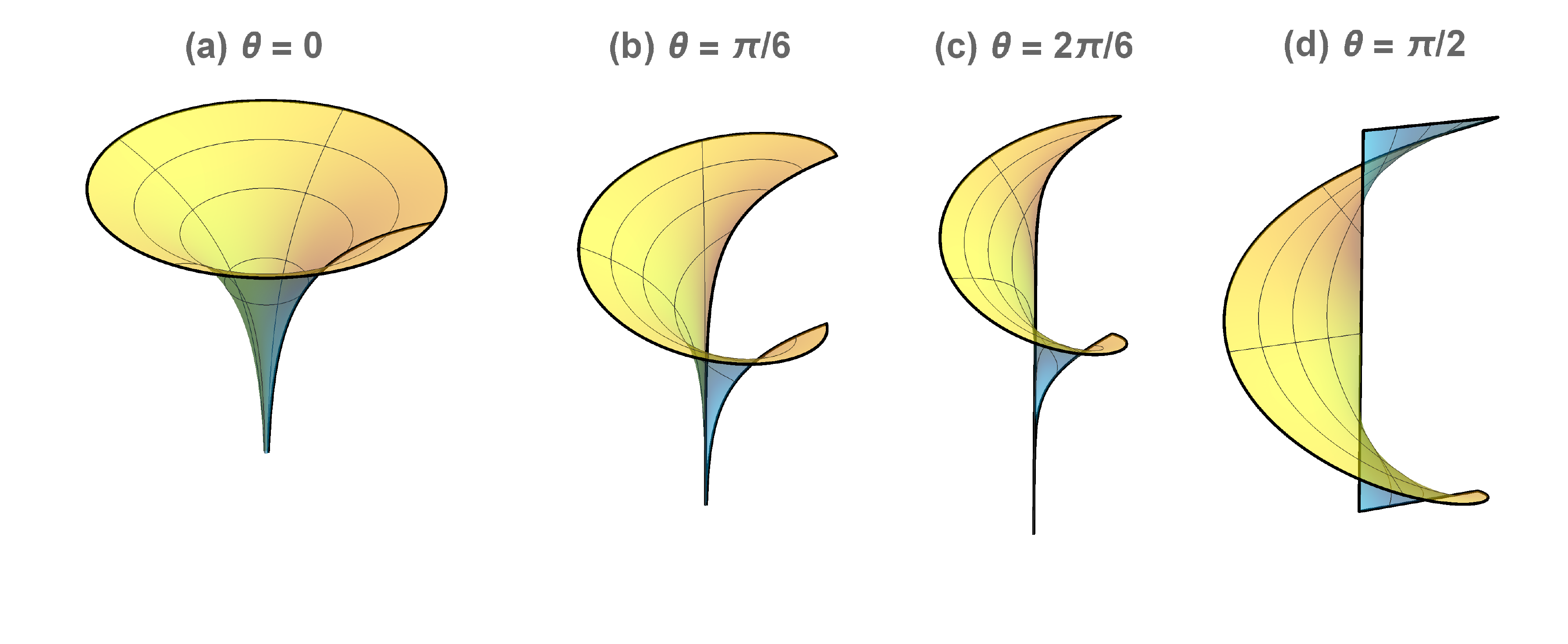}
    \caption{{The associate family of simply isotropic minimal surfaces of the helicoid parametrized as $\mathbf{x}_{\theta}(r,\varphi)=(r\cos(\varphi-\theta),r\sin(\varphi-\theta),p\ln r\cos\theta+p\,\varphi\sin\theta)$, where $p\in\mathbb{R}$ and $\theta\in[0,\frac{\pi}{2}]$. The family $\mathbf{x}_{\theta}$ provides the helicoid for $\theta=\frac{\pi}{2}$, figure (d), and the logarithmoid of revolution for $\theta=0$, figure (a), which is the only minimal surface of revolution in simply isotropic space. (In the figures, the isotropic $z$-direction points in the vertical.)}}
    \label{fig:AssociatedFamilyHelicoid}
\end{figure}

\begin{figure}
    \centering
    \includegraphics[width=\linewidth]{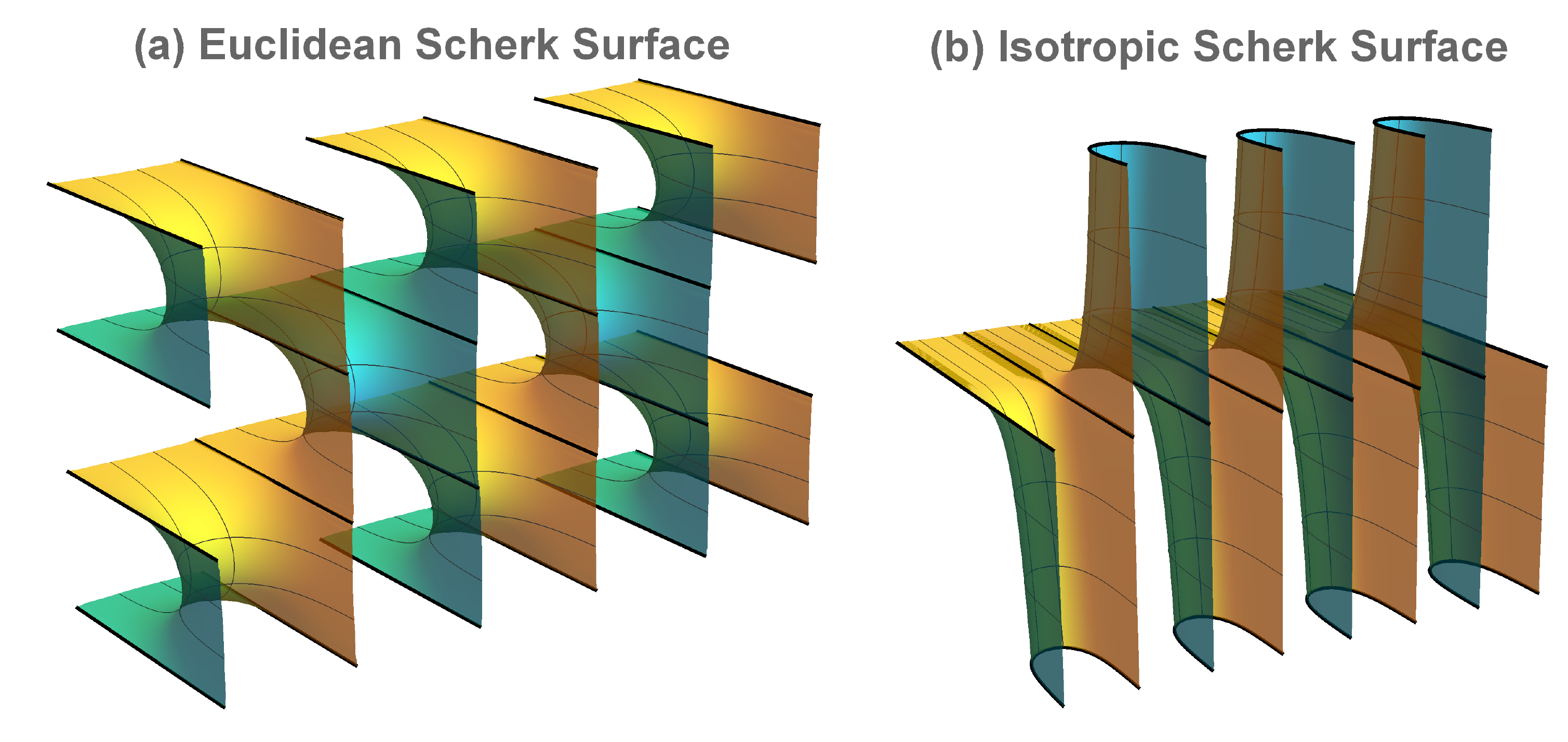}
    \caption{{Minimal Scherk surfaces: (a) The minimal surface in Euclidean space $\mathbb{E}^3$ implicitly defined by $x_1=\ln\frac{\cos x_3}{\cos x_2}$; and (b) The minimal surface in the simply isotropic  space $\mathbb{I}^3$ implicitly defined by $x_1=\ln\frac{x_3}{\cos x_2}$. Note that in $\mathbb{E}^3$ the Scherk surface is doubly periodic, while in $\mathbb{I}^3$ it is only singly periodic. (In the figures, the $z$-direction points in the vertical.)}}
    \label{fig:ScherkSurf}
\end{figure}

{In $\mathbb{E}^3$, the conjugate surface of the helicoid is the catenoid, which is the only minimal surface of revolution in $\mathbb{E}^3$. The logarithmoid of revolution is the only simply isotropic minimal surface of revolution \cite{daSilvaMJOU2021,Sachs1990} and, therefore, we may see it as the counterpart of the catenoid in $\mathbb{I}^3$.}

{\textit{(b) The Scherk surfaces}: The so-called first Scherk surface is the minimal surface obtained by moving a plane curve along another plane curve. The solution in $\mathbb{E}^3$ is provided by the doubly periodic minimal  surface $x_1=\ln\frac{\cos x_3}{\cos x_3}$. We may pose the same problem in $\mathbb{I}^3$. However, in $\mathbb{I}^3$, we must distinguish between three types of planes $\Pi_1$ and $\Pi_2$ containing the generating curves \cite{StrubeckerCrelle1954}. Namely, we have the:}
\begin{enumerate}
    \item Scherk surface of first type: $\Pi_1$ and $\Pi_2$ are both isotropic, e.g., $\Pi_1:x=\gamma\, y$, $\Pi_2:y=-\gamma\, y$ ($\gamma$ constant);
    \item Scherk surface of second type: $\Pi_1$ is isotropic and $\Pi_2$ is not isotropic, e.g., $\Pi_1:y=0$, $\Pi_2:z=0$;
    \item Scherk surface of third type: $\Pi_1$ and $\Pi_2$ are both non isotropic, e.g., $\Pi_1:y-z=\frac{\pi}{2}$, $\Pi_2:y+z=\frac{\pi}{2}$.
\end{enumerate}
{Those surfaces are 1. planes and hyperbolic paraboloid whose cross sectional hyperbolas lie on isotropic planes; 2. the simply periodic surface $x_1=-\ln\frac{x_3}{\cos x_2}$; and 3. the doubly periodic surface $\mathbf{x}(u^1,u^2)=(\ln\frac{\sin u^1}{\sin u^2},u^1+u^2,u^1-u^2)$.}

{We already obtained the isotropic holomorphic representation of hyperbolic paraboloids. Now, consider the holomorphic data $F=\frac{1}{z}$ and $G=-\frac{1}{z}$. Using the parabolic normal representation, we have $\int(F,-\rmi F,-\frac{F}{G})=(\ln z,-\rmi \ln z,z)$. Thus, up to translations, the corresponding minimal surface is}
\begin{equation}
    \mathbf{x}(z)=\re(\ln z,-\rmi \ln z,z)=(\ln\vert z\vert,\arg z,\re\, z).
\end{equation}
{Using the identity $\cos(\arctan x)=\frac{1}{\sqrt{1+x^2}}$, we see that the map $\mathbf{x}=(x_1,x_2,x_3)$ parametrizes the Scherk surface of second type (see Fig. \ref{fig:ScherkSurf}.(b).) $x_1 = \ln \frac{x_3}{\cos x_2}$.}
{Alternatively, defining $w=\ln z$, we reparametrize $\mathbf{x}(z)$ as the harmonic graph}
\begin{equation}
    \mathbf{x}(w=u+\rmi v)=(w,\re(\rme^w))=(u,v,\rme^u\cos v).
\end{equation}


{Finally, concerning  the Scherk surface of third type, Strubecker showed that it can be reparametrized as the harmonic graph of $h(z)=\im\,\ln \cosh z$ \cite{StrubeckerCrelle1954}. Therefore, the corresponding parabolic holomorphic data can be taken as $F=1$ and $G=-\rmi\coth z$. (Despite the huge resemblance with the classic minimal Scherk surface, the graph of $h(z)=\im\ln \cosh z$ is not minimal in $\mathbb{E}^3$.)}

\section{Isotropic Bj\"orling representation}

The Bj\"orling representation consists in the Cauchy problem for the minimal surface equation. More precisely, given an analytic curve $c(s)$ and a unit vector $\mathbf{e}$ along $c$, with $\{c',\mathbf{e}\}$ linearly independent, find the minimal surface $S$ which contains $c$ and such that $\mathbf{e}$ is tangent to $S$ along $c$. In Euclidean space, we may equivalently prescribe a unit vector field $\mathbf{n}$ such that $\mathbf{n}$ is normal to the sought minimal surface along $c$. 

If we prescribe the tangent planes along $c$, $s\mapsto\mbox{span}\{c'(s),\mathbf{e}(s)\}$, then from Eq. (\ref{eq::TangVecGenericHolRep}) we may set $\re(\phi)=c'$ and $\im(\phi)=-\mathbf{e}$ and the corresponding minimal surface is parameterized by
\begin{equation}
    \mathbf{x}(z)=\re\int\Big[c'(z)-\rmi \,\mathbf{e}(z)\Big]\rmd z,
\end{equation}
where $c(z)$ and $\mathbf{e}(z)$ are analytical extensions of $c(s)$ and of $\mathbf{e}(s)$, respectively.

If we prescribe the minimal normal $\mathbf{N}_m$ along $c$, then $\mathbf{N}_m\times c'$ is a tangent vector and we can use the previous construction. Indeed, from Eq. (\ref{eq::TangVecGenericHolRep}) we may set $\re(\phi)=c'$ and $\im(\phi)=-\mathbf{N}_m\times c'$ and the corresponding minimal surface is parameterized by
{
\begin{equation}
    \mathbf{x}(z)=\re\int\Big[c'(z)-\frac{\rmi}{\Omega(z)} \,\mathbf{N}_m(z)\times c'(z)\Big]\rmd z,
\end{equation}
}
where {$\Omega=\sqrt{\langle\mathbf{N}_m\times c',\mathbf{N}_m\times c'\rangle}$ and} $c(z)$ and $\mathbf{N}_m(z)$ are analytical extensions of $c(s)$ and of $\mathbf{N}_m(s)$, respectively. 

Finally, if we prescribe the parabolic normal $\xi$ along $c$, then we can construct a tangent vector given by {$\mathbf{e}=(\xi+\frac{1+\Vert\tilde{\xi}\Vert^2}{2}\mathcal{N})\times c'$}. Now, the corresponding minimal surface is parameterized by
{
\begin{equation}
    \mathbf{x}(z)=\re\int\Big[c'(z)-\frac{\rmi}{\Omega(z)} \,\Big(\xi(z)+\frac{1+\Vert\tilde{\xi}(z)\Vert^2}{2}\mathcal{N}\Big)\times c'(z)\Big]\rmd z,
\end{equation}
}
where {$\Omega=\sqrt{\langle\mathbf{e},\mathbf{e}\rangle}$ and} $c(z)$ and $\xi(z)$ are analytical extensions of $c(s)$ and of $\xi(s)$, respectively.

\subsection{{Examples}}

\begin{figure}
    \centering
    \includegraphics[width=\linewidth]{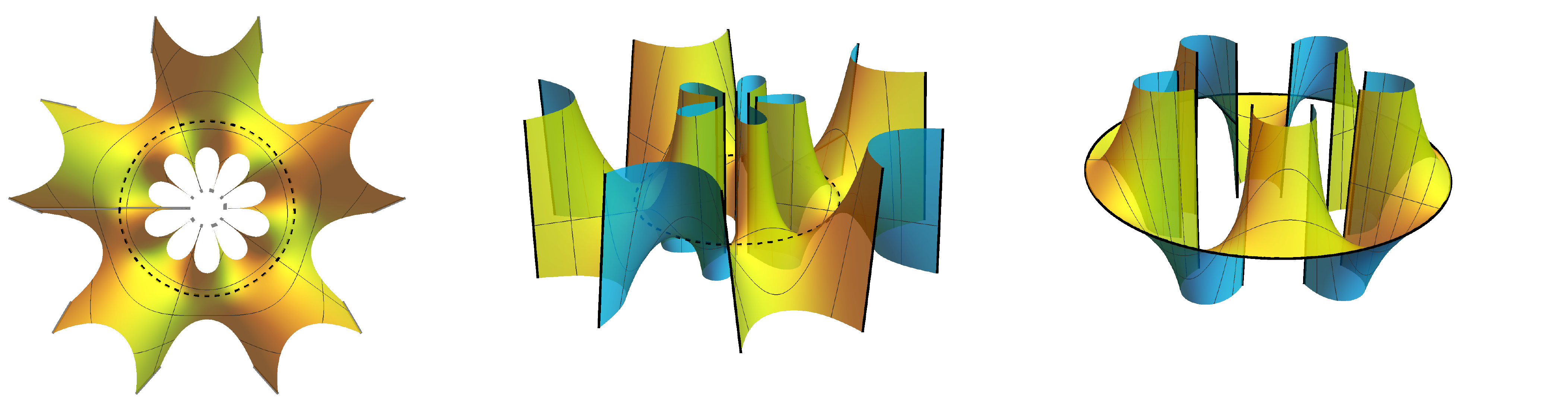}
    \caption{{Bent Scherk surface obtained from the Bj\"orling representation by prescribing an oscillating tangent plane along a circle (dashed black line) instead of an oscillating tangent plane along a line as in the usual Scherk surface (see Figure \ref{fig:ScherkSurf}): (Left) View from the top of the surface; (Center) Isometric projection of the surface; and (Right) Isometric projection of the surface showing only the region in the interior of the initial curve. (In the figures, we set $\lambda=5$ in Eq. (\ref{eq::BentScherk}) and the degenerate $z$-direction points in the vertical.)}}
    \label{fig:BentScherk}
\end{figure}

{We now build examples of simply isotropic minimal surfaces using the Bj\"orling representation corresponding to the isotropic counterparts of the catenoid (logarithmoid of revolution) and Scherk surfaces. In addition, we also build a bent Scherk surface, i.e., instead of a surface composed of a linear chain we obtain a circular one. (See figures \ref{fig:ScherkSurf} and \ref{fig:BentScherk}.)}

{\textit{(a) Logarithmoid of revolution:} In Euclidean space, we may obtain the catenoid by prescribing the analytic curve $c(s)=(\cos s,\sin s,0)$ and the vector field $\mathbf{e}=(0,0,1)$ and demand $\mathbf{e}$ to be tangent to the surface along $c$. However, in the simply isotropic space we should add some inclination to $\mathbf{e}$ to avoid isotropic tangent planes. Thus, consider the vector field $\mathbf{e}(s)=(\cos s,\sin s,p)$ along $c$. Then, the holomorphic curve associated with the Bj\"orling problem is 
$$c'(z)-\rmi \mathbf{e}(z)=(-\sin z-\rmi\cos z,\cos z-\rmi\sin z,-\rmi p)=(-\rmi\rme^{-\rmi  z},\rme^{-\rmi z},-\rmi p),$$
whose integration gives $\int(c'-\rmi\mathbf{e})\rmd z=(\rme^{-\rmi z},\rmi \rme^{-\rmi z},-\rmi pz)$. Now, performing the coordinate change $w=\rme^{-\rmi z}$, we finally obtain
\begin{equation}
    \mathbf{x}(w=r\rme^{-\rmi\varphi})=\re(w,\rmi w,p\ln w)=(r\cos\varphi,r\sin\varphi,p\ln r),
\end{equation}
which parametrizes the logarithmoid of revolution.}

{\textit{(b) Scherk surface:} Let us consider the analytic curve $c(s)=(0,s,A\cos s)$ and the vector field $\mathbf{e}=(1,0,A\cos s)$, where $A\in\mathbb{R}$. Then, the holomorphic curve associated with the Bj\"orling problem is 
$$c'(z)-\rmi \mathbf{e}(z)=(-\rmi,1,-A\sin z-\rmi A\cos z)=(-\rmi,1,-\rmi A\,\rme^{-\rmi z}),$$
whose integration gives $\int(c'-\rmi\mathbf{e})\rmd z=(-\rmi z,z,A\, \rme^{-\rmi z})$. We finally obtain 
\begin{equation}
    \mathbf{x}(z=v+\rmi u)=(u,v,A\,\rme^u \cos v),
\end{equation}
which parametrizes the Scherk surface of the second type (see Subsection \ref{subsect::ExamplesWeierstrassRepr}).}

{\textit{(c) Bent Scherk surface:} Instead of a singly periodic minimal surface, let us build a Scherk surface with a bent core. Consider the analytic curve $c(s)=(\sin s,\cos s,0)$ and the vector field $\mathbf{e}=(\sin s,\cos s,A\lambda\cos \lambda s)$, where $\lambda,A\in\mathbb{R}$. Then, the holomorphic curve associated with the Bj\"orling problem is 
$$c'-\rmi \mathbf{e}=(\cos z-\rmi \sin z,-\sin z-\rmi \cos z,-\rmi A\lambda \cos\lambda z)=(\rme^{-\rmi z},-\rmi \rme^{-\rmi z},-\rmi A\lambda \cos\lambda z),$$
whose integration gives $\int(c'-\rmi\mathbf{e})\rmd z=(\rmi\rme^{-\rmi z},\rme^{-\rmi z},-\rmi A\sin\lambda z)$. Performing the coordinate change $w=\rmi\,\rme^{-\rmi z}$ [$z=\rmi \ln(-\rmi w)$], we finally obtain the bent Scherk surface (see figure \ref{fig:BentScherk})
\begin{eqnarray}
    \mathbf{x}(w=r\rme^{\rmi \varphi}) &=& \re(w,-\rmi w,-\rmi\,A\sin[\rmi\lambda\ln(-\rmi w)])\nonumber\\
    & = & \Big(r\cos\varphi,r\sin\varphi,A\cos[\lambda(\frac{\pi}{2}-\varphi)]\sinh(\lambda\ln r)\Big).\label{eq::BentScherk}
\end{eqnarray}
To the best of our knowledge, the construction of a bent Scherk surface has never been reported on the literature. (A similar procedure may be used to construct a bent helicoid as done in $\mathbb{E}^3$. See Ref. \cite{LopezMMJ2018} and references therein.)}

\section{Concluding remarks}

In this work, we pushed further  results in Ref. \cite{SatoArXiv2018} concerning the Weierstrass representation of minimal surfaces in simply isotropic space $\mathbb{I}^3$ by providing a way to associate part of the holomorphic data with the choice of a Gauss map. We also discussed simply isotropic analogs of the Bj\"orling representation, which correspond to the Cauchy problem for the minimal surface equation.

Sato's choice for the Weierstrass representation in Ref. \cite{SatoArXiv2018} was motivated by the study of stationary surfaces in 4d Minkowski space $\mathbb{E}_1^4$, i.e., zero mean curvature spacelike surfaces. In fact, simply isotropic minimal surfaces are put in correspondence with flat stationary surfaces, while minimal and maximal surfaces in the 3d Euclidean and Minkowski spaces are put in correspondence with  stationary surfaces of curvature $K\leq0$ and $K\geq0$, respectively. 

Minimal, simply isotropic minimal, and maximal surfaces are associated with members of a 1-parameter family $\{f_{\theta}\}$ of stationary surfaces. The family of immersions $f_{\theta}$ though does not exhaust the class of stationary surfaces in $\mathbb{E}_1^4$. Indeed, from Theorem 2.4 of Ref. \cite{MaAM2013}, given two holomorphic functions $\phi,\psi$ and a holomorphic 1-form $\rmd h$ satisfying a set of regularity conditions one can generically represent stationary surfaces in $\mathbb{E}_1^4$ as
\begin{equation*}
\mathbf{x}(z)=2\,\re\int^z(\phi+\psi,-\rmi(\phi+\psi),1-\phi\psi,1+\phi\psi)\rmd h.    
\end{equation*}
The functions $\phi,\psi$ are associated with the Gauss maps of the stationary surface while $\rmd h$ is the height differential. 
Minimal, maximal, and simply isotropic minimal surfaces in $\mathbb{E}^3$, $\mathbb{E}_1^3$, and $\mathbb{I}^3$ respectively correspond to considering $\phi\equiv-1/\psi$, $\phi\equiv1/\psi$, and $\psi\equiv0$ (or $\phi\equiv0$) in the representation above. 

This later observation poses the interesting problem of finding other geometries that can be isometrically immersed in $\mathbb{E}_1^4$ and whose corresponding zero mean curvature surfaces can be put in correspondence with stationary surfaces of  $\mathbb{E}_1^4$. In addition, we may also ask how many non-equivalent geometries are needed in order to exhaust all stationary surfaces in $\mathbb{E}_1^4$.

In conclusion, the take-home message of this work is that there are multiple ways of holomorphically represent simply isotropic minimal surfaces. However, distinct choices have distinct advantages/disadvantages and, therefore, when deciding for a representation or another we should have in mind which specific geometric feature the chosen representation will allow us to control.

\begin{acknowledgement}
We would like to thank useful discussions with Alev Kelleci Akbay (Firat University) and Yuichiro Sato (Tokyo Metropolitan University). This work has been financially supported by the Mor\'a Miriam Rozen Gerber scholarship for Brazilian postdocs.
\newline
\textbf{Conflict of interest} The author declares he has no conflict of interest.
\end{acknowledgement}


\end{document}